\renewcommand{\epsilon}{\varepsilon} \renewcommand{\phi}{\varphi}
\newcommand{\cA}{\mathcal{A}} \newcommand{\cH}{\mathcal{H}}
\newcommand{\calP}{\mathcal{P}} \newcommand{\NN}{\mathbb{N}}
\newcommand{\RR}{\mathbb{R}} 
 \newcommand{\bi}{\boldsymbol i}
\newcommand{\bk}{\boldsymbol k} \newcommand{\bj}{\boldsymbol j}
\newcommand{\bq}{\boldsymbol q} \newcommand{\ba}{\boldsymbol a}
\theoremstyle{plain} \newtheorem{theorem}{Theorem}
\theoremstyle{definition} \newtheorem*{example*}{Example}
\theoremstyle{plain} \newtheorem{lemma}[theorem]{Lemma}
\newtheorem{corollary}[theorem]{Corollary}
\theoremstyle{remark} \newtheorem*{remark*}{Remark}
\providecommand{\floor}[1]{\lfloor#1\rfloor}
\providecommand{\abs}[1]{\lvert#1\rvert}
\title[Recurrence to shrinking targets]{Recurrence to shrinking
  targets on typical self-affine fractals} \author[H.~Koivusalo \and
F.~A.~Ram{\'i}rez]{Henna Koivusalo \and Felipe A.~Ram{\'i}rez}
\address{Department of Mathematics, University of
  York, Heslington, YO10 5DD, UK} \email{henna.koivusalo@york.ac.uk}
\address{Wesleyan University, Middletown, CT, USA}
\email{framirez@wesleyan.edu}
\date{\today}
\begin{document}

\thispagestyle{empty}


\begin{abstract}
  We explore the problem of finding the Hausdorff dimension of the set
  of points that recur to shrinking targets on a self-affine
  fractal. To be exact, we study the dimension of a certain related
  symbolic recurrence set. In many cases this set is equivalent to the
  recurring set on the fractal.
\end{abstract}


\thanks{H.K.~was supported by EPSRC Programme Grant
  EP/L001462 and F.A.R.~by EP/J018260/1.}

\maketitle


\section{Introduction}
\label{sec:intro}

Given a dynamical system $(X, T)$, and a sequence $\mathcal B=(B_k)$
of subsets of $X$, a {\bf shrinking target problem} is the study of
the size of the recurring set
\begin{equation*}
  R(\mathcal B) = \left\{x\in X : T^k(x)\in B_k \text{ for infinitely many }
    k\in\mathbb{N}\right\}.
\end{equation*}
Viewing the sets $B_k$ as ``targets,'' the set $R(\mathcal B)$ is the
set of points whose orbits under the dynamics hit the targets
infinitely often.

Interest in recurrence is classical in the study of dynamical
systems. There are many ways to interpret the ``size'' of
$R(\mathcal B)$. One way is with respect to an invariant measure on
$X$. In this context, for example, for any positive measure set
$B\subset X$, the Poincar\'e recurrence theorem implies that almost
all points of $X$ return to $B$ infinitely often, so that
$R(\mathcal B)$ has full measure when $B_k = B$ for all $k$. It is
thus fairly natural to ask what happens when the targets shrink. In
many settings $R(\mathcal B)$ has either full or zero measure
depending on the rate of shrinking of $B_k$. See, for example, Chernov
and Kleinbock \cite{ChernovKleinbock01} for a general treatment of
such a problem.

For exponentially decreasing balls on Julia sets, Hill and Velani
calculated first the Hausdorff dimension \cite{HillVelani95} and then
the Hausdorff measure \cite{HillVelani02} of the recurring set, in the
latter case proving a $0$-$1$-law for $R(\mathcal B)$. Further, in
\cite{HillVelani99} they considered the dimension of $R(\mathcal B)$
for toral automorphisms. More recently, the problem has been studied
for infinitely branching Markov maps \cite{Reeve}, piecewise expanding
maps of the unit interval \cite{PerssonRams} and in the dynamical
system of continued fraction expansions \cite{BingWangWuXu14}. In all
of these cases the dimension of the recurring set is given as a zero
point of a generalized pressure functional.

Recurring points are not only an interesting object in abstract
settings; they also appear in number theory in the study of
Diophantine approximation. The doubly-metric inhomogeneous version of
Khintchine's Theorem, usually stated as a $0$-$1$-law for the set of
``approximable'' pairs in $\RR^d\times\RR^d$, can be
interpreted through Fubini's Theorem as a $0$-$1$-law for the
recurring set of shrinking targets on a torus, which holds for almost
any toral translation. This connection to toral translations is
made pleasantly visible by Fayad \cite{Fayad06}, where among other
things he gives a proof of a result of Kurzweil \cite{Kurzweil66} from
this point-of-view. Similarly, one can interpret the classical
Jarn\'ik-Besicovitch theorem as a dimension result for a recurring set
of shrinking targets on the torus. For a more recent example, see
Bugeaud, Harrap, Kristensen and Velani
\cite{BugeaudHarrapKristensenVelani10}.

The above results mainly focus on conformal dynamical systems. In this
note we study the shrinking target problem in a simple non-conformal
case, namely, under affine dynamics. In particular, we will calculate
the dimension of the recurring set on a certain fractal set, the
definition of which will be given next.

Hutchinson \cite{Hutchinson81} originated the systematic study of
iterated function systems (IFSs) $\{f_1, \dots, f_m\}$, which define a
fractal set through the identity $E= \cup_{i=1}^m f_i(E)$. In
particular, the {\bf self-affine fractal} is the unique, nonempty,
compact set $E$ satisfying
\[
  E_{(a_1,\dots, a_m)} =E = \bigcup_{i=1}^m T_i(E)+a_i =
  \bigcup_{i=1}^m f_i(E),
\]
generated by iterating the affine maps
$\{f_i=T_i + a_i\}_{i=1,\dots, m}$. Here $\{T_1,\dots,T_m\}$ is a
collection of bijective linear contractions on $\mathbb R^d$ and
$a_1,\dots, a_m\in \mathbb R^d$ are translation vectors. If the images
$f_i(E)$ are disjoint it is possible to define a mapping $F:E \to E$
by setting
\[
  F(x)= f_i^{-1}(x)
\]
for points $x\in f_i(E)$, for all $i=1,\dots, m$. This sets up a
dynamical system $(E, F)$, and so it makes sense to formulate a
shrinking target problem on $E$.

The study of dimensional properties of self-affine sets comes in three
branches: First, as self-affine carpets due to, originally, Bedford
\cite{Bedford84} and McMullen \cite{McMullen84}; second, for generic
self-affine sets in the footsteps of Falconer \cite{Falconer88};
third, for \emph{every} self-affine set in some family, as
in~\cite{Falconer92}. In this paper we consider a shrinking target
problem associated to the set $E$ in the generic sense, that is, for
generic translations $\ba =(a_1,\dots, a_m)\in\RR^{dm}$. A
complementary problem of mass escape for carpet type sets has been
studied by Ferguson, Jordan and Rams in \cite{FergusonJordanRams}.

The main result of this article can be stated informally as follows.
\theoremstyle{plain} \newtheorem*{main}{Main Theorem}
\begin{main}[An informal version of Theorem \ref{thm}]
  The dimension of the recurring set to shrinking targets on
  self-affine fractals is almost surely the unique zero point
  $s_0\in\RR^+$ of a modified pressure function $P(s)$.
\end{main}
We will impose a quasimultiplicativity condition \eqref{eq:cone} on
the maps $T_i$, and give the modified pressure
formula~\eqref{eq:pressure}. In this article, the target sets $B_k$
are dynamical balls corresponding to cylinder sets in the symbolic
space (for definitions, see~\S\ref{subsec:self-affine}). These target
sets match the dynamics on the set. The method of proof is largely
classical and the main difficulty lies in finding the right definition
for the pressure functional. Also, in order to obtain the lower bound
we need to construct a Cantor type subset of the recurring set. We do
this in the spirit of the proof of the mass transference principle of
Beresnevich and Velani \cite{BeresnevichVelani06}. Their result cannot
be applied to the affine setting directly.

It is classical to study the size of the recurring set in terms of
invariant measures, and it turns out that, at least in certain cases,
this problem is easily solved based on \cite{ChernovKleinbock01}. In
particular, in \S\ref{sec:examples} we notice that as a corollary of
\cite[Theorem 2.1]{ChernovKleinbock01} it is possible to measure the
size of the recurring set in terms of the Gibbs measure at the
dimension of the self-affine set $E$. According to
\cite{KaenmakiReeve} Gibbs measures are measures of maximal dimension
on $E$, which in part motivates their use in this context. For further
reading on general thermodynamic formalism and Gibbs measures we refer
to \cite{Bowen75}, and in relation to self-affine sets, to
\cite{KaenmakiReeve}).
\theoremstyle{plain}\newtheorem*{coro}{Corollary to
  \cite{ChernovKleinbock01}}
\begin{coro}[An informal version of Corollary \ref{cor:Borel}]
  In terms of the Gibbs measure at the dimension of the self-affine
  set $E$, either almost all or almost no points are recurring,
  depending on whether the sum of the measures of the target sets
  diverges or converges.
\end{coro}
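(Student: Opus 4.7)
The plan is to deduce the dichotomy directly from Theorem 2.1 of \cite{ChernovKleinbock01}, whose hypothesis is a summable quasi-independence (or summable decay of correlations) estimate for the dynamical system with respect to the relevant invariant measure, together with the standard divergence assumption on the measures of the targets. So the real work is to check that the system $(E,F)$, equipped with the Gibbs measure $\mu$ at the dimension of $E$, and with targets $B_k$ that are dynamical balls corresponding to symbolic cylinders, satisfies those hypotheses.

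First I would record that $\mu$ is $F$-invariant and ergodic; this is precisely the content of the thermodynamic formalism on self-affine sets as developed in \cite{KaenmakiReeve} (coding $(E,F)$ by the full shift and pushing forward the equilibrium state). Given this, the convergence half of the statement is immediate from the classical Borel--Cantelli lemma: $F$-invariance gives $\mu(F^{-k}(B_k))=\mu(B_k)$, so $\sum_k\mu(B_k)<\infty$ forces $\mu(R(\mathcal B))=0$.

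For the divergence half, the task is to verify the quasi-independence inequality required by \cite[Thm.~2.1]{ChernovKleinbock01}, namely an estimate of the form
\[
\Abs{\mu\bigl(F^{-j}(B_j)\cap F^{-k}(B_k)\bigr) - \mu(B_j)\mu(B_k)} \le \phi(k-j)\,\mu(B_j)\mu(B_k)
\]
(or an additive analogue) with $\phi$ summable after appropriate weighting. Since each $B_k$ is a dynamical ball whose $\mu$-measure is controlled by the Gibbs property, and the coding conjugates $(E,F)$ with a one-sided full shift on which $\mu$ pulls back to a Gibbs measure for a H\"older potential, this is a consequence of the exponential decay of correlations for Gibbs measures on subshifts of finite type (Bowen \cite{Bowen75}): cylinder functions are H\"older and their correlations decay geometrically in the gap $k-j$, which is more than enough summability for the Chernov--Kleinbock hypothesis.

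The main obstacle is therefore the clean transfer between the symbolic and geometric pictures: one must be sure that the target sets $B_k$ really do correspond to cylinders of bounded complexity under the coding, that $\mu$ inherits the Gibbs property needed for the decay-of-correlations estimate, and that the separation of the images $f_i(E)$ makes the coding map a measurable isomorphism so that measures of intersections computed symbolically agree with those on $E$. Once these identifications are in place, plugging the Gibbs estimates into \cite[Thm.~2.1]{ChernovKleinbock01} yields $\mu(R(\mathcal B))=1$ whenever $\sum_k\mu(B_k)=\infty$, completing the $0$--$1$ law.
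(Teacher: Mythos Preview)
Your overall architecture matches the paper's: the convergence half is Borel--Cantelli, and the divergence half is an appeal to \cite[Theorem~2.1]{ChernovKleinbock01}. The difference, and the place where your argument has a genuine gap, is in how you justify that the Chernov--Kleinbock hypotheses are met.

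You assert that the coded measure ``pulls back to a Gibbs measure for a H\"older potential'' and then invoke Bowen's exponential decay of correlations. But in the self-affine setting the relevant potential is the singular value function $\phi^s$, which is only \emph{sub}-multiplicative: $\log\phi^s(T_{\bi|_n})$ is a subadditive sequence, not a Birkhoff sum of a H\"older observable. The Gibbs measure here is constructed via the subadditive thermodynamic formalism of \cite{KaenmakiReeve,KaenmakiVilppolainen10}, and as the paper itself notes (citing \cite[Example~6.4]{KaenmakiVilppolainen10}), such measures need not even exist without further assumptions. So Bowen's theory from \cite{Bowen75} does not apply out of the box, and your decay-of-correlations step is not justified as written.

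The paper's route bypasses this entirely. Under the quasimultiplicativity hypothesis~\eqref{eq:cone}, the Gibbs inequality~\eqref{eq:gibbs} combined with $D\phi^t(T_{\bq})\phi^t(T_{\bi})\le\phi^t(T_{\bq\bi})\le\phi^t(T_{\bq})\phi^t(T_{\bi})$ gives the quasi-Bernoulli estimate $\mu[\bq\bi]\asymp\mu[\bq]\,\mu[\bi]$ directly, with uniform constants. This is exactly the content of ``Facts~1--3'' in \cite[\S3]{ChernovKleinbock01}, and those facts are all that the proof of \cite[Theorem~2.1]{ChernovKleinbock01} actually uses. So the paper simply checks these facts hold for $\mu$ under~\eqref{eq:cone} and concludes, without any detour through H\"older regularity or classical decay of correlations. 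Your argument can be repaired along the same lines: replace the appeal to Bowen by a direct verification of quasi-independence from the Gibbs property plus~\eqref{eq:cone}.
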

Even though these formulations of the shrinking target problem give a
very natural starting point to the analysis of shrinking targets for
nonconformal dynamics, some questions remain. We discuss these at the
end of ~\S\ref{sec:examples}.

\subsection*{Acknowledgments} We are grateful to Thomas Jordan for
suggesting the problem to the first author, and for helpful
discussions. We also thank Antti K{\"a}enm{\"a}ki for taking the time
to read our manuscript, and for giving valuable feedback.

\section{Shrinking targets, fractals, and symbolic dynamics}

Let $E_{\ba}$, $f_i$, $T_i$ be as in the introduction.

\subsection{Self-affine fractals and symbolic
  dynamics}\label{subsec:self-affine}

It is standard to associate to $E_{\ba}$ a symbolic dynamical system
given by the left shift $\sigma$ on the sequence space
$\mathcal{A}^\NN = \{1, \dots, m\}^\NN$. One identifies points
$\bi=(i_1, i_2, \dots)\in\mathcal{A}^\NN$ with points on $E_{\ba}$
through the projection
\begin{equation*}
  \pi(\bi) = \lim_{k\to\infty}f_{i_1}\circ \dots \circ f_{i_k} (0) \in E_{\ba},
\end{equation*}
and if this projection is injective, then $\sigma$ induces a
well-defined dynamical system on $E_{\ba}$, taking the point
$\pi(\bi)=p\in E_{\ba}$ to $f_{i_1}^{-1}(p)$. In this case, the
shrinking targets problem (defined in the next section) on the
symbolic dynamical system corresponds to a shrinking targets problem
on the fractal with the dynamics induced by $\sigma$.

\subsection{Shrinking targets on a symbolic dynamical system}

A natural way to define a shrinking targets problem on the dynamical
system $\sigma:\cA^\NN\to\cA^\NN$ is to select a target point
$\bj\in\cA^\NN$ and define the targets to be cylinder sets
\begin{equation*}
  [\bj|_k] = \left\{\bi\in\cA^\NN : \bi|_k = \bj|_k\right\}\subset\cA^\NN
\end{equation*}
of increasing length $k\in\NN$ (and hence decreasing diameter) around
that point, where $\bj|_k$ and $\bi|_k$ denotes the truncations $(j_1,
\dots, j_k)$ and $(i_1, \dots, i_k)$ of $\bj$ and $\bi$, respectively.

In this paper we treat the shrinking targets problem for $\mathcal B=
\{B_k\}_{k\in\NN}$ where $B_k = [\bj|_{\ell_k}]$ and
$\{\ell_k\}\subseteq\NN$ is some fixed (non-decreasing) sequence. We
denote the recurring set $R(\mathcal B)$ by $R(\bj)$. It is exactly
\begin{align*}
  R(\bj) &= \left\{ \bi\in\cA^\NN : \sigma^k(\bi)|_{\ell_k} = \bj|_{\ell_k} \textrm{ for infinitely many } k\in\NN\right\} \\
  &= \limsup_{k\to\infty} R(\bj, k)
\end{align*}
where $R(\bj, k) = \left\{\bi\in\cA^\NN : \sigma^k(\bi)|_{\ell_k} =
  \bj|_{\ell_k}\right\}$.

\subsection{Shrinking targets on self-affine fractals}\label{sec:on fractal}

We project our symbolic shrinking targets problem to the fractal
$E_{\ba}$ through $\pi$. That is, we seek the Hausdorff dimension of
the set
\begin{equation*}
  \widetilde R(\pi(\bj)) := \pi(R(\bj)) \subset E_{\ba}.
\end{equation*}
An intuitive interpretation is to think of the shrinking targets
around $\pi(\bj)$ as the succession of ``construction sets'' of
$E_{\ba}$ to which $\pi(\bj)$ belongs. (One may think of the stardard
Cantor set, for example, and take the shrinking targets to be
subintervals appearing in its standard construction.) Though these are
very natural shrinking targets on a fractal, the notion is really only
meaningful if the affine maps $\{f_i\}$ satisfy a strong enough
separation condition. Still, the set $\widetilde R(\pi(\bj))$ is
well-defined and can be studied regardless.

\section{List of notations}

This list of standard notations may be used as reference.

\begin{itemize}
\item $\cA^\NN = \{1,\dots,m\}^{\mathbb N}$ is the sequence space on
  $m$ letters.
\item $\boldsymbol{j}|_{ k} = j_1 j_2 \dots j_k$ is the string
  obtained by truncating $\bj\in\cA^\NN$.
\item $\abs{\boldsymbol{i}}$ is the length of a finite word
  $\boldsymbol{i}=(i_1,\dots, i_k)$, so for example
  $\abs{\boldsymbol{j}|_{k}}=k$.
\item $\boldsymbol{i}\boldsymbol{j}|_{k} = i_1 i_2 \dots
  i_{\abs{\boldsymbol{i}}} j_1 j_2 \dots j_k$ is a concatenation of
  finite words.
\item $\bi\wedge\bi'$ is the maximal common beginning of $\bi,
  \bi'\in\cA^\NN$.
\item $[\bq]=\{\bi\in \cA^\NN : \bi|_{\abs{\bq}}=\bq\}$ is the
  cylinder set defined by the finite word $\bq$.
\item Let $f_i:\mathbb R^d\to\mathbb R^d$ be the affine maps
  $f_i(x)=T_i(x)+ a_i$.
\item For $\abs{\bi}=k$, denote $f_{\bi}=f_{i_1}\circ\dots\circ
  f_{i_k}$, and similarly for $T_{\bi}$.
\item $R(\bj)=\{\bi\in \cA^\NN :
  \sigma^k(\bi)|_{\ell_k}=\bj|_{\ell_k}\textrm{ infinitely often}\}$
  is our symbolic recurring set.
\item $\widetilde R(\pi(\bj)) = \pi(R(\bj))$.
\item $R(k)=R(\bj,k)= \{\bi\in \cA^\NN :
  \sigma^k(\bi)|_{\ell_k}=\bj|_{\ell_k}\}$, so that we have
  $R(\bj)=\limsup R(k)$.
\end{itemize}

\section{The singular value function}

For a linear map $T:\mathbb R^d\to \mathbb R^d$ define the singular
values
\begin{equation*}
  \sigma_1(T)\ge \dots \ge \sigma_d(T)
\end{equation*}
as the lengths of the semiaxes of the ellipsoid $T(B(0,1))$. For any
$0\le t\le d$, let
\[
\phi^t(T)=\sigma_1(T)\dots
\sigma_{\floor{t}}(T)\sigma_{\floor{t}+1}(T)^{\{t\}}
\]
where $\floor{t}$ is the integer part of $t$ and $\{t\}$ is the
fractional part.  The function $\phi^t(T)$ is called the singular
value function, and it satisfies
\begin{equation}\label{eq:sing1}
  \sigma_d(T)^t\le \phi^t(T)\le \sigma_1(T)^t.
\end{equation}
Furthermore, if $T_1, \dots, T_n$ are linear maps on $\RR^d$, then
\begin{equation}\label{eq:sing2}
  \min_{i=1,\dots, n}\sigma_d(T_i)^\delta \le \frac{\sum_{i=1}^n \phi^{t+\delta}(T_i)}{\sum_{i=1}^n\phi^t(T_i)}\le \max_{i=1,\dots, n}\sigma_1(T_i)^\delta
\end{equation}
for all sufficiently small $\delta>0$. The singular value function is
submultiplicative, that is, if also $U: \mathbb R^d\to\mathbb R^d$ is
a linear map, then $\phi^t(TU)\le \phi^t(T)\phi^t(U)$.  For proofs see
\cite{Falconer88}, for example.

We study collections of linear maps for which there exists some
constant $D\in(0,1)$ such that
\begin{equation*}
  \phi^t(T_{\bi\bi'})\ge D\phi^t(T_{\bi})\phi^t(T_{\bi'})
\end{equation*}
for any finite words $\bi, \bi'$.  This condition holds, for example,
for similarities and for maps on $\RR^2$ satisfying the \emph{cone
  condition}, meaning that there is a cone that is mapped strictly
into itself by all the $T_i$'s, see K\"aenm\"aki and
Shmerkin~\cite{KaenmakiShmerkin09}.

\section{The main result and its proof}

Given the affine maps $\{f_i\}_{i=1, \dots, m}$, a point
$\bj\in\cA^\NN$, and $s\in\RR^+$, we define
\begin{equation}
  P(s,\boldsymbol{j}) :=
  \lim_{k\to\infty}\frac{1}{k}\log\sum_{\abs{\boldsymbol{i}}=k}\varphi^s(T_{\boldsymbol{i}\boldsymbol{j}|_{
      \ell_k}}).\label{eq:pressure}
\end{equation}
This is a modification of the standard pressure function (see
\cite{Falconer88} for example), which is defined to be
\begin{equation}\label{eq:ordinary pressure}
  \mathcal P(s)=\lim_{k\to \infty}\frac{1}{k}\log \sum_{\abs{\bi}=k}\phi^s(T_{\bi}).
\end{equation}
In fact, notice\footnote{We thank an anonymous referee for making this
  observation.} that in the presence of~\eqref{eq:cone} we have
\begin{equation}
  P(s,\boldsymbol{j}) = \mathcal P(s) + 
  \lim_{k\to\infty}\frac{1}{k}\log \varphi^s(T_{\boldsymbol{j}|_{
      \ell_k}}).\label{eq:refpressure}
\end{equation}
Notice also that it is possible for this limit to not exist. However,
if it \emph{does} exist for some $\bj$ and $s\in\RR^+$, then in fact
$P(s, \bj)$ is a continuous and non-increasing function of $s$. In
this case, we prove that the Hausdorff dimension of the recurring set
$\widetilde R(\pi(\bj))$ is almost surely the unique zero point
$s_0\in\RR^+$ of $P(\cdot, \bj)$.

\begin{theorem}\label{thm}
  Let $E_{\boldsymbol{a}}$ be the fractal determined by the affine
  maps $\{T_i + a_i \}_{i=1}^m$, and suppose there is some $D\in(0,1)$
  such that
  \begin{equation}\label{eq:cone}
    \phi^t(T_{\bi\bi'})\ge D\phi^t(T_{\bi})\phi^t(T_{\bi'})
  \end{equation}
  for any finite words $\bi, \bi'$. Furthermore, assume that
  $\max_i \sigma_1(T_i) < \frac{1}{2}$. Consider shrinking targets
  defined by cylinder sets of length
  $\{\ell_k \}_{k\in\NN}\subseteq\NN$ around some fixed
  $\bj\in\cA^\NN$.

  If $\lim_{k\to\infty}\frac{1}{k}\log \varphi^s(T_{\bj |_{\ell_k}})$
  exists in $[-\infty, 0]$, then with respect to any ergodic
  $\sigma$-invariant probability measure there is almost surely a
  unique number $s_0\in\RR^+$ satisfying $P(s_0, \bj) = 0$, and for
  almost every $\ba\in\RR^{dm}$, the Hausdorff dimension of the
  recurring set $\widetilde{R}(\pi(\bj))\subset E_{\ba}$ equals $s_0$.
\end{theorem}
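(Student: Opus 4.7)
The plan is to sandwich $\dim_H \widetilde{R}(\pi(\bj))$ between $s_0$ from above (deterministically) and from below (for Lebesgue-almost every translation vector $\ba$). Before anything, I would verify that $s_0$ is well-defined: the hypothesis says $P(s,\bj)$ exists, and by~\eqref{eq:sing2} applied to the sums in~\eqref{eq:pressure}, raising $s$ by $\delta$ multiplies each summand by at most $(\max_i \sigma_1(T_i))^\delta \le (1/2)^\delta$, so $P(\cdot,\bj)$ is strictly decreasing. Since $P(0,\bj)=\log m>0$ and $P(s,\bj)\to -\infty$ as $s$ grows (each $\phi^s$ is then smaller than a fixed geometric factor to the $k$th power by~\eqref{eq:sing1}), there is a unique zero $s_0$.

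For the upper bound, I would fix $s>s_0$ and use the natural cover
\[
\widetilde{R}(\pi(\bj)) \subset \bigcup_{k\ge N}\pi(R(\bj,k)) = \bigcup_{k\ge N}\bigcup_{\abs{\bi}=k} f_{\bi\bj|_{\ell_k}}(E_{\ba}).
\]
Each piece lies in an ellipsoid $T_{\bi\bj|_{\ell_k}}(B_0)$ for a fixed ball $B_0\supset E_{\ba}$, and the standard ellipsoid-by-balls cover (cf.~\cite{Falconer88}) bounds its contribution to the $s$-dimensional Hausdorff pre-measure by a constant multiple of $\phi^s(T_{\bi\bj|_{\ell_k}})$. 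Summing and invoking the definition~\eqref{eq:pressure} of $P(s,\bj)<0$, the series $\sum_{k\ge N}\sum_{\abs{\bi}=k}\phi^s(T_{\bi\bj|_{\ell_k}}) \le \sum_{k\ge N}e^{k(P(s,\bj)+o(1))}$ converges geometrically and vanishes as $N\to\infty$, giving $\mathcal H^s(\widetilde R(\pi(\bj)))=0$. Letting $s\searrow s_0$ yields $\dim_H\widetilde R(\pi(\bj))\le s_0$; this step requires no genericity in $\ba$.

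For the lower bound, fix $s<s_0$ so $P(s,\bj)>0$. The strategy, as announced in the introduction, is to carve out a symbolic Cantor subset $K\subset R(\bj)$ and push a Frostman-type measure to $E_{\ba}$ via $\pi$. I would choose a very rapidly growing sequence $k_1<k_2<\cdots$ with $k_{n+1}\gg k_n+\ell_{k_n}$ and take $K$ to consist of all sequences of the form $\bi_1\bj|_{\ell_{k_1}}\bi_2\bj|_{\ell_{k_2}}\cdots$, where the free blocks $\bi_n$ are chosen so that the $\bj|_{\ell_{k_n}}$ insertion begins at position $k_n+1$. By construction every element of $K$ lies in $R(\bj,k_n)$ for all $n$, so $K\subset R(\bj)$. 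Define a measure $\nu$ on $K$ that assigns each $n$th level cylinder a weight proportional to $\phi^s$ of the corresponding composed map; the quasimultiplicativity~\eqref{eq:cone} makes this weight comparable across concatenations, and rapid growth of $k_n$ ensures the ``forced" blocks $\bj|_{\ell_{k_n}}$ contribute the right logarithmic density to match $P(s,\bj)$ rather than the unmodified pressure $\mathcal P(s)$. I would then run Falconer's potential-theoretic argument (requiring exactly $\max_i\sigma_1(T_i)<1/2$): integrate the $s$-energy of $\pi_*\nu$ over $\ba$, swap integrals by Fubini, bound the transversality integral $\int |\pi_{\ba}(\bi)-\pi_{\ba}(\bi')|^{-s}\,d\ba$ by a constant times $\phi^s(T_{\bi\wedge\bi'})^{-1}$, and conclude that the averaged $s$-energy equals a constant times a sum that is finite because of the chosen weights and $P(s,\bj)>0$. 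Hence $\dim_H\pi(K)\ge s$ for a.e.~$\ba$, and letting $s\nearrow s_0$ through a countable sequence finishes the proof.

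The main obstacle is the lower bound, specifically calibrating the Cantor construction and the Frostman weights so that the energy bound comes out in terms of the modified pressure $P(s,\bj)$ from~\eqref{eq:pressure} instead of the ordinary Falconer pressure $\mathcal P(s)$. The decomposition~\eqref{eq:refpressure} under~\eqref{eq:cone} will be the conceptual bridge: it says $P(s,\bj)$ differs from $\mathcal P(s)$ by the Lyapunov-type correction $\lim\frac{1}{k}\log\phi^s(T_{\bj|_{\ell_k}})$, which is exactly the cost of forcing the inserted blocks $\bj|_{\ell_{k_n}}$, and so assigning weights proportional to $\phi^s$ of the composed map reabsorbs this correction. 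Verifying the transversality estimate and the switch of integrals in the non-conformal Falconer argument, while keeping the bookkeeping on nested cylinders compatible with~\eqref{eq:cone}, is the technical heart of the proof.
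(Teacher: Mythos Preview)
Your plan matches the paper's proof: the upper bound via the natural cover is Lemma~\ref{lem:upper}, and the lower bound via a Cantor subset $C=\bigcap_n R(\bj,k_n)$ carrying a $\phi$-weighted measure, followed by Falconer's energy/transversality argument, is exactly Lemmas~\ref{lem:cylinderbound} and~\ref{lem:energy}. The one detail to correct is the exponent on the weights. The paper fixes $s<t<s_0$ and uses $\phi^t$-weights, proving the cylinder bound $\mu[\bq]\le H\phi^t(T_{\bq})$; then in the $s$-energy estimate the ratio $\phi^t(T_{\bq})/\phi^s(T_{\bq})\le\sigma_+^{\abs{\bq}(t-s)}$ furnishes a convergent geometric series over word lengths (see the computation in Lemma~\ref{lem:energy}). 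With your $\phi^s$-weights pitted against the $s$-energy that ratio is identically $1$ and the resulting sum $\sum_q 1$ diverges, so as written the energy bound does not close. Slipping in an intermediate exponent $t$ fixes this with no change to the overall strategy.
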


\begin{remark*}
  If the maps $\{T_i + a_i\}_{i=1}^m$ satisfy the strong separation
  condition then the set $\widetilde R(\ba,\bj)$ from
  Theorem~\ref{thm} has a dynamical meaning as a subset of the fractal
  $E_{\ba}$.
\end{remark*}

Of course, it now becomes a question of \emph{when}
$\lim_{k\to\infty}\frac{1}{k}\log \varphi^s(T_{\bj |_{\ell_k}})$
exists. In the following corollary we show that in particular it
exists if $\lim_{k\to\infty}\ell_k/k$ does.

\begin{corollary}
  With the assumptions and notation of Theorem~\ref{thm}, if
  $\lim_{k\to\infty}\ell_k/k$ exists, then Theorem~\ref{thm} applies
  and we will have
  \begin{equation}
    \label{eq:cor}
    \dim_{\cH}\widetilde{R}(\pi(\bj)) =
    \begin{cases}
      \dim_{\cH}E_{\ba} &\textrm{if } \lim_{k\to\infty} \ell_k/k =0 \\
      s_0 &\textrm{if } \lim_{k\to\infty} \ell_k/k \in (0,\infty)\\
      0 &\textrm{if } \lim_{k\to\infty} \ell_k/k = \infty
    \end{cases}
  \end{equation}
  almost surely (in the sense of the theorem).
\end{corollary}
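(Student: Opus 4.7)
My plan is to verify the hypothesis of Theorem~\ref{thm}—namely, the existence of $\lim_{k\to\infty}\frac{1}{k}\log\phi^s(T_{\bj|_{\ell_k}})$—in each of the three regimes $\alpha = 0$, $\alpha \in (0,\infty)$, and $\alpha = \infty$, where $\alpha := \lim_{k\to\infty}\ell_k/k$, and then identify the zero $s_0$ of $P(\cdot,\bj)$ via the decomposition~\eqref{eq:refpressure}. The key observation is that, because $T_{\bj|_{m+n}} = T_{\bj|_m}T_{\sigma^m\bj|_n}$ and $\phi^s$ is submultiplicative, the sequence $G_k(\bj) := \log\phi^s(T_{\bj|_k})$ is a subadditive cocycle over the shift $\sigma$ on $\cA^\NN$. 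Since $G_1$ is bounded and hence integrable for any $\sigma$-invariant probability $\mu$, Kingman's subadditive ergodic theorem gives, for each $s$ and $\mu$-a.e.\ $\bj$, a limit
\[
  L(s) := \lim_{\ell\to\infty}\tfrac{1}{\ell}\log\phi^s(T_{\bj|_\ell}) \in [-\infty,0],
\]
the upper bound coming from $\phi^s(T)\le\sigma_1(T)^s<1$. A diagonalization over rational $s$ and monotonicity of $s\mapsto\tfrac{1}{\ell}\log\phi^s$ upgrades this to a single full-measure set on which $L(s)$ exists at all the $s$ we will need.

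In the case $\alpha\in(0,\infty)$ one has $\ell_k\to\infty$, so
\[
  \tfrac{1}{k}\log\phi^s(T_{\bj|_{\ell_k}}) = \tfrac{\ell_k}{k}\cdot\tfrac{1}{\ell_k}\log\phi^s(T_{\bj|_{\ell_k}}) \longrightarrow \alpha L(s),
\]
so Theorem~\ref{thm} applies and produces the unique $s_0$ with $\mathcal P(s_0)+\alpha L(s_0)=0$. In the case $\alpha=0$, submultiplicativity together with~\eqref{eq:cone} yields the sandwich
\[
  D^{\ell_k-1}(\min_i\sigma_d(T_i))^{s\ell_k} \le \phi^s(T_{\bj|_{\ell_k}}) \le (\max_i\sigma_1(T_i))^{s\ell_k},
\]
and both bounds give $\tfrac{1}{k}\log\phi^s(T_{\bj|_{\ell_k}})\to 0$; hence $P(s,\bj)=\mathcal P(s)$ and $s_0$ is the affinity dimension, which equals $\dim_{\cH} E_{\ba}$ for $\ba$-a.e.\ translation by Falconer's dimension formula (valid under $\max_i\sigma_1(T_i)<1/2$).

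The case $\alpha=\infty$ falls outside the literal conclusion of Theorem~\ref{thm}: the upper bound above forces $\tfrac{1}{k}\log\phi^s(T_{\bj|_{\ell_k}}) \le s\tfrac{\ell_k}{k}\log(\max_i\sigma_1(T_i)) \to -\infty$ for every $s>0$, so $P(s,\bj)\equiv-\infty$ and no positive $s_0$ makes $P(s_0,\bj)=0$. The value $\dim_{\cH}\widetilde R(\pi(\bj))=0$ then follows from a direct covering: the cylinders $\pi([\bi'\bj|_{\ell_k}])$ with $|\bi'|=k\ge N$ cover $\widetilde R(\pi(\bj))$, and their $s$-Hausdorff sum is bounded by $\sum_{k\ge N} m^k(\max_i\sigma_1(T_i))^{s(k+\ell_k)}$, which vanishes as $N\to\infty$ for every $s>0$ since $\max_i\sigma_1(T_i)<1/2$ and $\ell_k/k\to\infty$. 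The main obstacles are (i) turning Kingman's pointwise-in-$s$ theorem into a $\bj$-a.s.\ statement uniform in the relevant $s$, handled by monotonicity of $\phi^s$, and (ii) the $\alpha=\infty$ regime, which lies outside Theorem~\ref{thm} and requires the extra covering argument above.
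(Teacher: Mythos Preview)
Your argument is correct and follows essentially the same route as the paper: Kingman's subadditive ergodic theorem for the finite-limit case (the paper's Lemma~\ref{lem:number exists}), the identification $P(\cdot,\bj)=\mathcal P(\cdot)$ when $\ell_k=o(k)$, and a direct covering estimate for the superlinear regime (the paper's Lemma~\ref{lem:superlinear}). Two small differences are worth noting: for $\alpha=0$ your elementary sandwich via~\eqref{eq:sing1} and~\eqref{eq:cone} gives the limit $0$ for \emph{every} $\bj$, not just almost every, which is slightly cleaner than the paper's appeal to Lemma~\ref{lem:number exists}; and you are right to flag that the $\alpha=\infty$ case does not literally fall under Theorem~\ref{thm} (since $P(s,\bj)\equiv-\infty$ for $s>0$ has no zero in $\RR^+$), so the Corollary's phrase ``Theorem~\ref{thm} applies'' is loose there and the separate covering argument is indeed needed.
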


\begin{proof}
  If $\lim_{k\to\infty}\ell_k/k = \infty$, then the result follows
  from Lemma~\ref{lem:superlinear}, and in fact holds for \emph{every}
  target point $\bj$ and translation $\ba$.

  If $\lim_{k\to\infty}\ell_k/k\in [0,\infty)$, then
  Lemma~\ref{lem:number exists} implies the existence of
  $\lim_{k\to\infty}\frac{1}{k}\log \varphi^s(T_{\bj |_{\ell_k}})$, so
  Theorem~\ref{thm} applies. It is only left to observe that in the
  case where $\ell_k = o(k)$, we will have $\calP(s) = P(s,\bj)$, so
  the unique zero points coincide, and therefore the recurring set has
  the same Hausdorff dimension as the fractal.
\end{proof}


\subsection{Existence and uniqueness}
\label{sec:existence}

The next lemma shows that if there is some $L\in[0,\infty)$ for which
$\ell_k \sim L\cdot k$, then there exists a unique number
$s_0\in\RR^+$ such that $P(s_0,\bj)=0$. It is furthermore almost
surely independent of $\bj$ with respect to any $\sigma$-invariant
measure.

\begin{lemma} \label{lem:number exists} With the same assumptions and
  notation as in Theorem~\ref{thm}, suppose that
  $\lim_{k\to\infty}\ell_k/k \in [0,\infty)$. Then, with respect to
  any $\sigma$-ergodic probability measure on $\cA^\NN$, there almost
  surely exists a number $s_0\in\RR^+$ satisfying $P(s_0, \bj)=0$.
  Furthermore, this number is unique and almost surely independent of
  $\bj\in\cA^\NN$.
\end{lemma}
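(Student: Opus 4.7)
The plan is to exploit the decomposition \eqref{eq:refpressure} in order to reduce the statement to analysing the $\bj$-dependent term
\[
c_L(s,\bj) := \lim_{k\to\infty}\frac{1}{k}\log \varphi^s(T_{\bj|_{\ell_k}}),
\]
under the hypothesis $\ell_k/k \to L \in [0,\infty)$. Setting $f_n(\bj) := \log \varphi^s(T_{\bj|_n})$, submultiplicativity of $\varphi^s$ combined with $T_{\bj|_{n+m}} = T_{\bj|_n}\, T_{\sigma^n \bj|_m}$ gives the subadditive cocycle bound $f_{n+m}(\bj) \le f_n(\bj) + f_m(\sigma^n \bj)$. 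The hypothesis $\max_i \sigma_1(T_i) < 1/2$ ensures $\varphi^s \le 1$, hence $f_n \le 0$; Kingman's subadditive ergodic theorem then furnishes, for each fixed $s$, an almost-sure limit $n^{-1} f_n(\bj) \to c(s) \in [-\infty, 0]$ which is constant by ergodicity of the chosen measure.

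For $\mu$-a.e.~$\bj$ the identity $\frac{1}{k}f_{\ell_k}(\bj) = \frac{\ell_k}{k}\cdot\frac{f_{\ell_k}(\bj)}{\ell_k}$ yields $c_L(s,\bj) = L\cdot c(s)$ whenever either $L > 0$ or $c(s) > -\infty$. In the remaining delicate case $L = 0$ with $c(s) = -\infty$, iterating \eqref{eq:cone} gives $\varphi^s(T_{\bj|_n}) \ge D^{n-1}\prod_{i=1}^n \varphi^s(T_{j_i})$, which together with the trivial bound $\varphi^s \le 1$ sandwiches
\[
(\ell_k - 1)\log D + \ell_k \log\min_i \varphi^s(T_i) \le f_{\ell_k}(\bj) \le 0,
\]
whence $\frac{1}{k}f_{\ell_k}(\bj) \to 0$, consistent with the convention $0\cdot(-\infty) = 0$. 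To promote this pointwise-in-$s$ convergence to hold simultaneously in $s$, I would fix a countable dense set $S \subset \RR^+$ and take the single full-measure set of $\bj$ for which the limit holds at every $s \in S$; sandwiching via the monotonicity and continuity of $s \mapsto \varphi^s(T)$ (valid because $\sigma_1(T) < 1$ for every relevant $T$) then extends the identity $P(s,\bj) = \mathcal P(s) + L\,c(s)$ to all $s\in\RR^+$. This deterministic formula automatically gives the asserted a.s.\ independence of $\bj$.

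Finally, I turn to existence and uniqueness of the zero of $P(\cdot,\bj)$. Evaluating at $s=0$ gives $P(0,\bj) = \log m > 0$, while submultiplicativity of $\sigma_1$ yields $\varphi^s(T_{\bi\bj|_{\ell_k}}) \le (\max_i \sigma_1(T_i))^{s(k+\ell_k)}$, so $P(s,\bj) \le \log m + s(1+L)\log\max_i \sigma_1(T_i) \to -\infty$ as $s \to \infty$, since $\max_i \sigma_1(T_i) < 1/2$. Strict monotonicity follows by applying \eqref{eq:sing2} to $\mathcal P$ (so that $\mathcal P(s+\delta) - \mathcal P(s) \le \delta \log\max_i \sigma_1(T_i) < 0$ for small $\delta$) and combining with the monotonicity of $c(s)$. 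Continuity of $P$ (already noted below \eqref{eq:refpressure}) plus the intermediate value theorem then produce a unique $s_0 \in \RR^+$ with $P(s_0, \bj) = 0$. The main obstacle I anticipate is the borderline case $L=0$ with $c(s)=-\infty$ and the related bookkeeping of propagating a single null set in $\bj$ uniformly across $s$; the remainder of the argument is standard Kingman-plus-pressure bookkeeping.
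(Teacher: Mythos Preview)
Your proposal is correct and follows essentially the same route as the paper: apply Kingman's subadditive ergodic theorem to $f_n(\bj)=\log\varphi^s(T_{\bj|_n})$, combine the resulting a.s.\ limit with $\ell_k/k\to L$ and the decomposition~\eqref{eq:refpressure}, then read off existence and uniqueness of $s_0$ from $P(0,\bj)=\log m>0$, $P(s,\bj)\to-\infty$, and strict monotonicity via~\eqref{eq:sing2}. You are in fact more careful than the paper on two points it glosses over---the case $L=0$ (where your sandwich argument shows the extra term vanishes; note that under~\eqref{eq:cone} one always has $c(s)>-\infty$, so the ``delicate case'' never actually occurs) and the passage from a null set for each fixed $s$ to a single null set good for all $s$.
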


\begin{proof}
  Suppose $\ell_k\sim L\cdot k$, $L>0$. Fix $s$. Let
  \begin{equation*}
    X(m,n) = \log \varphi^s(T_{\sigma^m(\bj)\mid_{n-m}}).
  \end{equation*}
  We will apply Kingman's subadditive ergodic theorem to the sequence
  $X(m,n)$ with respect to the transformation $\sigma$, so we must
  check that $X$ is subadditive and $\sigma$-equivariant. Clearly all
  $X(m,n)$ are integrable, since they only depend on finite words.

  \textbf{Equivariance} is verified by the following routine
  calculation:
  \begin{multline*}
    X(m,n)\circ \sigma = \log \phi^s(T_{\sigma^m(\sigma(\bj))|_{n-m}})
    \\
    = \log \phi^s(T_{\sigma^{(m+1)}(\bj)|_{(n+1)-(m+1)}}) = X(m+1,
    n+1).
  \end{multline*}
  For \textbf{subadditivity} we must check that
  $X(0, n) \leq X(0, m) + X(m, n)$. Again, this is routine:
  \begin{multline*}
    X(0, m) + X(m, n) = \log \phi^s(T_{\bj|_m}) + \log
    \phi^s(T_{\sigma^m(\bj)|_{n-m}}) \\
    =\log \phi^s(T_{\bj|_m})\phi^s(T_{\sigma^m(\bj)|_{n-m}})\geq \log
    \phi^s(T_{\bj|_n}) = X(0,n).
  \end{multline*}

  Therefore, by the subadditive ergodic theorem, there almost surely
  exists a limit
  \[
    \lim_{n\to\infty} \frac{X(0, n)}{n} = \lim_{k\to\infty} \frac{X(0,
      \ell_k)}{\ell_k}
  \]
  which is $\sigma$-invariant and so almost everywhere constant in
  $\bj$ with respect to any $\sigma$-ergodic measure. Hence,
  \begin{equation*}
    \lim_{k\to\infty}\frac{X(0,\ell_k)}{k} =
    \lim_{k\to\infty}\frac{X(0,\ell_k)}{\ell_k}\cdot\frac{\ell_k}{k} = L\lim_{k\to\infty} \frac{X(0, \ell_k)}{\ell_k} 
  \end{equation*}
  also exists and is almost surely constant in $\bj$, and similarly
  for
  \begin{equation*}
    P(s, \bj) = \calP(s) + \lim_{k\to\infty}\frac{X(0,\ell_k)}{k}. 
  \end{equation*}

  Notice that the limit is continuous and strictly decreasing in $s$
  since by \eqref{eq:sing2},
  \[
    \sigma_-^{(k+\ell_k)\delta}\le
    \frac{\sum_{\abs{\bi}=k}\phi^{s+\delta}(T_{\bi\bj|_{\ell_k}})}{\sum_{\abs{\bi}=k}\phi^s(T_{\bi\bj|_{\ell_k}})}\le
    \sigma_+^{(k+\ell_k)\delta},
  \]
  where $0<\sigma_-= \min_i \sigma_d(T_i)$ and
  $\sigma_+= \max_i\sigma_1(T_i)<1$.

  Since $P(0,\bj)>0 $ and $P(s,\bj)\to -\infty$ when $s\to\infty$, the
  existence of a unique zero follows.
\end{proof}

\subsection{Upper bound}

The following lemma shows that $s_0$ is an upper bound for the
dimension of the recurring set $\widetilde R(\pi(\bj))$.

\begin{lemma}\label{lem:upper}
  Let $\{\ell_k\}\subseteq\NN$ be a non-decreasing divergent sequence
  and suppose that $P(\cdot,\bj)$ is decreasing, with unique zero
  $s_0\in\RR$. Then the Hausdorff dimension of
  $\widetilde R(\pi(\bj))$ is bounded above by $s_0$.
\end{lemma}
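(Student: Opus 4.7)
The plan is to execute a standard limsup set covering argument, using Falconer's singular-value covering of ellipsoids to convert the combinatorial sums $\sum_{|\bi|=k}\phi^s(T_{\bi\bj|_{\ell_k}})$ into sums of diameters to the power $s$. Since $R(\bj)=\limsup_{k\to\infty}R(k)$, for every $N\in\NN$ we have
\[
  \widetilde R(\pi(\bj)) = \pi(R(\bj)) \subseteq \bigcup_{k\ge N} \pi(R(k)) = \bigcup_{k\ge N}\bigcup_{\abs{\bi}=k}f_{\bi\bj|_{\ell_k}}(E_{\ba}).
\]
The goal is to fix $s>s_0$, cover each piece $f_{\bi\bj|_{\ell_k}}(E_{\ba})$ by finitely many Euclidean balls, and show that the resulting $s$-cost sum tends to $0$ as $N\to\infty$, while the mesh of the cover shrinks to $0$.

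First I would bound the diameters. Since $E_{\ba}$ is compact, it lies in some ball $B(0,R_{\ba})$, so $f_{\bi\bj|_{\ell_k}}(E_{\ba})$ is contained in a translate of $T_{\bi\bj|_{\ell_k}}(B(0,R_{\ba}))$, which is an ellipsoid with semiaxes $R_{\ba}\sigma_j(T_{\bi\bj|_{\ell_k}})$. I would then invoke the standard ellipsoid covering lemma (as in Falconer~\cite{Falconer88}): this ellipsoid can be covered by a collection of Euclidean balls of common diameter $2R_{\ba}\sigma_{\lfloor s\rfloor+1}(T_{\bi\bj|_{\ell_k}})$ whose total $s$-cost is bounded by $C\,\phi^s(T_{\bi\bj|_{\ell_k}})$ for some constant $C=C(s,R_{\ba})$. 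Because $\max_i\sigma_1(T_i)<\tfrac12$, the diameters of \emph{all} the covering balls associated to words of length $\ge k+\ell_k$ are at most $2R_{\ba}\sigma_+^{k+\ell_k}\to 0$ as $k\to\infty$, so the meshes shrink uniformly.

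Next I would exploit the exponential decay given by $P(s,\bj)<0$. Fix any $s>s_0$; since $P(\cdot,\bj)$ is continuous and strictly decreasing past its zero, there is $\eta>0$ such that $P(s,\bj)<-\eta$. By the definition~\eqref{eq:pressure}, for all sufficiently large $k$,
\[
  \sum_{\abs{\bi}=k}\phi^s(T_{\bi\bj|_{\ell_k}}) \le e^{-\eta k/2}.
\]
Summing the Falconer bound over all $\bi$ and all $k\ge N$ gives a total $s$-cost at most $C\sum_{k\ge N}e^{-\eta k/2}$, which tends to $0$ as $N\to\infty$. Combined with the mesh estimate from the previous paragraph, this produces admissible $\delta$-covers of $\widetilde R(\pi(\bj))$ with $s$-cost arbitrarily close to $0$, so $\mathcal H^s(\widetilde R(\pi(\bj)))=0$ and therefore $\dim_{\cH}\widetilde R(\pi(\bj))\le s$. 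Letting $s\searrow s_0$ completes the proof.

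The only nontrivial ingredient is the ellipsoid covering lemma; once it is in place, everything else is a bookkeeping exercise. The potential obstacle is ensuring that the constant $C$ in that lemma does not depend on $k$ or on $\bi$, which is indeed the case as long as $s$ is fixed and the ellipsoid semiaxes are monotone decreasing (which they are, by definition of singular values). No separation hypothesis on $\{f_i\}$ is used for this upper bound, and the bound holds for every $\ba\in\RR^{dm}$, not just almost every one.
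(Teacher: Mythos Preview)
Your proposal is correct and follows essentially the same route as the paper's proof: both cover $\widetilde R(\pi(\bj))$ by the projected cylinders $f_{\bi\bj|_{\ell_k}}(B(0,M))$ for $k\ge N$, apply Falconer's ellipsoid covering to control the $s$-cost of each piece by $\phi^s(T_{\bi\bj|_{\ell_k}})$ up to a constant, use $P(s,\bj)<0$ for $s>s_0$ to get exponential decay of $\sum_{|\bi|=k}\phi^s(T_{\bi\bj|_{\ell_k}})$, and let $N\to\infty$. One small remark: your phrase ``continuous and strictly decreasing past its zero'' is not literally among the hypotheses, but all you actually use is $P(s,\bj)<0$ for $s>s_0$, which follows from the assumed uniqueness of the zero together with monotonicity.
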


\begin{proof} 
  We will show that for any $t > s_0$, we have
  $\mathcal{H}^t(\widetilde R(\pi(\bj)) = 0$, where $\mathcal{H}^t$
  denotes Hausdorff measure.

  Notice that when $t>s_0$, then for all large $k$,
  \[
    \sum_{\abs{\bi}=k}\phi^{t}(T_{\bi\bj|_{\ell_k}}) \le
    \exp(-k\epsilon)
  \]
  for some $\epsilon >0$. Therefore
  \begin{equation}
    \sum_{k=0}^\infty \sum_{\abs{\bi}=k}\phi^{t}(T_{\bi\bj|_{\ell_k}})
    <\infty.\label{eq:upperconv}
  \end{equation}

  For all $N$, we have that
  \[
    R(\bj)=\limsup_{k\to\infty} \left\{\bi\in \cA^\NN\mid
      \sigma^k(\bi)|_{\ell_k}=\bj|_{\ell_k}\right\}\subset \bigcup
    _{k=N} ^\infty \bigcup_{\abs{\bi}=k}[\bi\bj|_{\ell_k}],
  \]
  hence,
  \[
    \widetilde R(\pi(\bj)) \subset \bigcup _{k=N} ^\infty
    \bigcup_{\abs{\bi}=k}\pi[\bi\bj|_{\ell_k}].
  \]
  Since the $T_i$'s are contractions, there is some $M>0$ sufficiently
  large that $f_i(B(0,M)) \subset B(0,M)$ for all $i=1,\dots,m$. Then
  we have the inclusion
  $\pi[\bi\bj|_{\ell_k}]\subset f_{\bi\bj|_{\ell_k}}(B(0,M))$, and
  therefore
  \[
    \widetilde R(\pi(\bj)) \subset \bigcup _{k=N} ^\infty
    \bigcup_{\abs{\bi}=k}f_{\bi\bj|_{\ell_k}}(B(0,M)).
  \]

  Recall that
  $ \sigma_1(T_{\bi\bj|_{\ell_k}}), \dots,
  \sigma_d(T_{\bi\bj|_{\ell_k}})$
  are the lengths of the semiaxes of the ellipsoid
  $T_{\bi\bj|_{\ell_k}}(B(0,1))$, in decreasing order. Therefore, the
  number of cubes of side-length
  $\sigma_{\floor{t}+1}q(T_{\bi\bj|_{\ell_k}})$ required to cover the
  sets $f_{\bi\bj|_{\ell_k}}(B(0,M))$ is bounded by
  \begin{equation*}
    A\sigma_1(T_{\bi\bj|_{\ell_k}})\dots \sigma_{\floor{t}+1}(T_{\bi\bj|_{\ell_k}})^{-\floor{t}},
  \end{equation*}
  where $A>0$ is some constant. We can bound the Hausdorff measure
  \begin{align*}
    \mathcal H^t(\widetilde R(\pi(\bj))) &\le A \sum_{k=N}^\infty
                                           \sum_{\abs{\bi}=k}\sigma_1(T_{\bi\bj|_{\ell_k}})\dots
                                           \sigma_{\floor{t}+1}(T_{\bi\bj|_{\ell_k}})^{-\floor{t}}\cdot
                                           \sigma_{\floor{t}+1}(T_{\bi\bj|_{\ell_k}})^{t} \\
                                         &= A \sum_{k=N}^\infty
                                           \sum_{\abs{\bi}=k}\phi^t(T_{\bi\bj|_{\ell_k}}).
  \end{align*}
  But $N\in\NN$ was arbitrary, so recalling~\eqref{eq:upperconv}, we
  can take $N\to\infty$ to see that
  $\mathcal H^t(\widetilde R(\pi(\bj)))=0$. This proves the lemma.
\end{proof}

The next lemma shows that if the shrinking targets are projected
cylinder sets whose lengths grow super-linearly, then the
corresponding recurring set is zero-dimensional.

\begin{lemma}\label{lem:superlinear}
  Suppose that $\lim_{k\to\infty}\ell_k/k = \infty$. Then
  $\dim_{\cH}\widetilde{R}(\pi(\bj),\ell_k)=0$.
\end{lemma}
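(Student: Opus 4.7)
The plan is to reuse the covering scheme from the proof of Lemma~\ref{lem:upper}: for any $N\in\NN$, cover $\widetilde R(\pi(\bj))$ by $\bigcup_{k\geq N}\bigcup_{\abs{\bi}=k}f_{\bi\bj|_{\ell_k}}(B(0,M))$, and then cover each $f_{\bi\bj|_{\ell_k}}(B(0,M))$ by cubes exactly as in that lemma. The resulting upper bound on the Hausdorff $t$-premeasure is $A\sum_{k\geq N}\sum_{\abs{\bi}=k}\phi^t(T_{\bi\bj|_{\ell_k}})$, and the diameters of the covering cubes vanish uniformly in $\bi$ as $k\to\infty$ since $\sigma_1(T_{\bi\bj|_{\ell_k}})\leq\sigma_+^{k+\ell_k}\to 0$, where $\sigma_+=\max_i\sigma_1(T_i)<1/2$. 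So, if the series converges, letting $N\to\infty$ will yield $\mathcal{H}^t(\widetilde R(\pi(\bj)))=0$.

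The whole content of the lemma is therefore to verify convergence of this series for every $t>0$. By submultiplicativity of $\phi^t$ together with the estimate $\phi^t(T)\leq\sigma_1(T)^t$ from \eqref{eq:sing1}, each summand is at most $\sigma_+^{(k+\ell_k)t}$, and since there are at most $m^k$ words of length $k$,
\begin{equation*}
\sum_{\abs{\bi}=k}\phi^t(T_{\bi\bj|_{\ell_k}})\leq \exp\!\bigl(k\bigl[\log m + t(1+\ell_k/k)\log\sigma_+\bigr]\bigr).
\end{equation*}
Because $\ell_k/k\to\infty$ and $\log\sigma_+<0$, the bracketed quantity eventually drops below any prescribed negative constant, so the series is dominated by a convergent geometric series. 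Hence $\mathcal{H}^t(\widetilde R(\pi(\bj)))=0$ for every $t>0$, and $\dim_{\cH}\widetilde R(\pi(\bj))=0$ follows.

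I do not anticipate any real obstacle here. The key observation is that the super-linear growth of $\ell_k$ forces the fixed tail $\bj|_{\ell_k}$ to shrink each singular value function $\phi^t(T_{\bi\bj|_{\ell_k}})$ fast enough to beat the $m^k$ combinatorial explosion, without needing any information about the pressure $P(s,\bj)$. Notably, the argument is uniform in both $\bj$ and $\ba$, so the conclusion in fact holds for \emph{every} target point and \emph{every} translation vector, which matches what the corollary to Theorem~\ref{thm} asserts in this regime.
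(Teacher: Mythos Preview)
Your proposal is correct and essentially identical to the paper's own proof: both reduce to the covering argument of Lemma~\ref{lem:upper} and verify convergence of $\sum_{k}\sum_{\abs{\bi}=k}\phi^t(T_{\bi\bj|_{\ell_k}})$ for every $t>0$ via the bound $m^k\sigma_+^{t(k+\ell_k)} = \bigl(m\,\sigma_+^{t(1+\ell_k/k)}\bigr)^k$, noting the base tends to $0$ since $\ell_k/k\to\infty$. Your closing remark that the argument is uniform in $\bj$ and $\ba$ is also exactly the observation the paper records when invoking this lemma.
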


\begin{proof}
  We show that $\mathcal{H}^\delta(\widetilde{R}(\pi(\bj),\ell_k))=0$
  for any $\delta>0$.

  Let $g(k) = \ell_k/k$. Then $g(k)\to\infty$ as $k\to\infty$, and
  \begin{equation*}
    \sum_{k\in\NN} \sum_{\abs{\bi}=k}\phi^\delta(T_{\bi\bj|_{\ell_k}}) \leq \sum_{k\in\NN} m^k (\sigma_+^\delta)^{k+\ell_k}
    \leq  \sum_{k\in\NN} \left(m\cdot \sigma_+^{\delta(1+g(k))}\right)^k<\infty
  \end{equation*}
  because for all $k$ large enough, the parenthetical quantity is
  $<1/2$ (say). The argument from the proof of Lemma~\ref{lem:upper}
  shows that $\cH^\delta(\widetilde R(\pi(\bj)))=0$, and proves the
  lemma.
\end{proof}

\subsection{Lower bound}

Fix numbers $s<t<s_0$. Let $(n_k)$ be a sequence of natural
numbers. Let $C= \bigcap_{k= 1}^\infty R(n_k)$, where

\begin{align*}
  R(n_k) &= \left\{\bi \in \mathcal{A}^\NN:
           \sigma^{n_k}(\bi)|_{\ell_{n_k}} =
           \bj\mid_{\ell_{n_k}}\right\}\\
         &= \left\{\bi \in \mathcal{A}^\NN: \sigma^{n_k}(\bi)\in B_{n_k}
           \right\}.
\end{align*}
Notice that $C(k)= R(n_k)$ are finite unions of compact cylinder sets
and thus compact, so that $C$ is compact as well. Furthermore,
$C\subset R= \limsup R(k)$, and we hope to prove that
$\dim_{\mathcal H} C\ge s$.

\begin{lemma}\label{lem:cylinderbound}
  Let $E_{\ba}$ and $\{\ell_k\}$ be as in Theorem~\ref{thm}. Fix
  $t<s_0$. Then there is a probability measure $\mu$ supported on $C$,
  and a constant $H$, satisfying
  \begin{equation}\label{eq:cylinderbound}
    \mu[\bq]\le H\phi^t(T_{\bq})
  \end{equation}
  for all finite words $\bq$.
\end{lemma}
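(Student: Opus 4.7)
\emph{Plan.} Fix $t<s_0$. Then $P(t,\bj)>0$, and since \eqref{eq:refpressure} together with the assumption that the limit there lies in $[-\infty,0]$ gives $P(\cdot,\bj)\le\mathcal P(\cdot)$, also $\mathcal P(t)>0$. Hence both $\sum_{|\bi|=p}\phi^t(T_{\bi\bj|_{\ell_p}})$ and $M(p):=\sum_{|\bi|=p}\phi^t(T_\bi)$ grow exponentially in $p$. Setting $N_k:=n_k+\ell_{n_k}$ (with $N_0:=0$) and $p_k:=n_k-N_{k-1}$, let $\mathcal F_k$ denote the family of admissible length-$N_k$ cylinders; each $[\bq]\in\mathcal F_k$ factors uniquely as $[\mathbf{v}_1\bj|_{\ell_{n_1}}\cdots\mathbf{v}_k\bj|_{\ell_{n_k}}]$ with $|\mathbf{v}_i|=p_i$. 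I will define $\mu$ on $C$ by a consistent Cantor-type construction in the spirit of Falconer~\cite{Falconer88}: for each child $[\bi\mathbf{u}\bj|_{\ell_{n_k}}]$ of $[\bi]\in\mathcal F_{k-1}$, set $\mu[\bi\mathbf{u}\bj|_{\ell_{n_k}}]:=\mu[\bi]\,\phi^t(T_{\mathbf{u}\bj|_{\ell_{n_k}}})/S_k$, where $S_k:=\sum_{|\mathbf{u}'|=p_k}\phi^t(T_{\mathbf{u}'\bj|_{\ell_{n_k}}})$.

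For $\bq\in\mathcal F_k$, unfolding gives $\mu[\bq]=\prod_{i=1}^k\phi^t(T_{\mathbf{v}_i\bj|_{\ell_{n_i}}})/\prod_{i=1}^k S_i$, while $k-1$ applications of \eqref{eq:cone} at the junctions between the chunks $\mathbf{v}_i\bj|_{\ell_{n_i}}$ yield $\phi^t(T_\bq)\ge D^{k-1}\prod_i\phi^t(T_{\mathbf{v}_i\bj|_{\ell_{n_i}}})$, so $\mu[\bq]/\phi^t(T_\bq)\le 1/(D^{k-1}\prod_iS_i)$. I bound $S_i$ from below by splitting $\bj|_{\ell_{n_i}}=\bj|_{\ell_{p_i}}\mathbf{w}_i$ and applying \eqref{eq:cone}:
\[
S_i\;\ge\;D\,\phi^t(T_{\mathbf{w}_i})\sum_{|\mathbf{u}|=p_i}\phi^t(T_{\mathbf{u}\bj|_{\ell_{p_i}}})\;\ge\;D\,\sigma_-^{t(\ell_{n_i}-\ell_{p_i})}\,e^{p_iP(t,\bj)/2}
\]
for $p_i$ large, where $\sigma_-=\min_i\sigma_d(T_i)$ and the last step uses the definition of $P(t,\bj)$ and $\sigma_d(T_{\mathbf{w}_i})\ge\sigma_-^{|\mathbf{w}_i|}$. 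Choosing $(n_k)$ so rapidly increasing that $p_iP(t,\bj)$ dominates $2t(\ell_{n_i}-\ell_{p_i})|\log\sigma_-|+2|\log D|$ plus a summable excess (achievable under the hypothesis $\lim\ell_k/k<\infty$ of the corollary, since then $\ell_{n_i}-\ell_{p_i}=O(N_{i-1})$), one obtains $\log S_i\ge|\log D|+\eta_i$ with $\sum\eta_i<\infty$. This makes $1/(D^{k-1}\prod_iS_i)\le H$ uniformly in $k$.

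For general $\bq$: if $\bq$ is inconsistent with the constraints then $\mu[\bq]=0$; for $n_k\le|\bq|\le N_k$, the extension $\tilde\bq\in\mathcal F_k$ obtained by appending the remainder of $\bj|_{\ell_{n_k}}$ satisfies $[\bq]\cap C=[\tilde\bq]\cap C$, so $\mu[\bq]=\mu[\tilde\bq]\le H\phi^t(T_{\tilde\bq})\le H\phi^t(T_\bq)$ (using submultiplicativity and $\phi^t\le1$); and for $N_{k-1}<|\bq|<n_k$ with $\bq=\bq_0\mathbf{u}$ ($\bq_0\in\mathcal F_{k-1}$), summing over the $p_k-|\mathbf{u}|$-letter extensions, applying submultiplicativity to the numerator and \eqref{eq:cone} to the denominator yields
\[
\mu[\bq]\;\le\;\mu[\bq_0]\,\phi^t(T_\mathbf{u})/(D\,M(|\mathbf{u}|))\;\le\;H\,\phi^t(T_\bq)/(D^2\,M(|\mathbf{u}|)),
\]
which gives the required bound because $\inf_m M(m)=:c_t>0$, a consequence of $\mathcal P(t)>0$. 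The principal technical obstacle is precisely ensuring that the product $\prod_i S_i$ outpaces $D^{-(k-1)}$ uniformly across infinitely many Cantor levels; this balancing act, which rests on $P(t,\bj)>0$ and on a careful explicit choice of $(n_k)$ (e.g.\ with $p_k\ge \alpha N_{k-1}$ for $\alpha$ large), is the heart of the proof.
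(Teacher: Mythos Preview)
Your construction is a legitimate alternative to the paper's, but as written it proves less than the lemma demands. The lemma sits under the hypotheses of Theorem~\ref{thm}, which assume only that $\lim_{k\to\infty}\frac{1}{k}\log\phi^s(T_{\bj|_{\ell_k}})$ exists; your lower bound on $S_i$ explicitly needs the extra hypothesis $\lim_{k\to\infty}\ell_k/k<\infty$, as you acknowledge. Even there, the claim $\ell_{n_i}-\ell_{p_i}=O(N_{i-1})$ is not quite right: convergence of $\ell_k/k$ only gives $\ell_{n_i}-\ell_{p_i}\le L\,N_{i-1}+2\delta\,n_i$ for arbitrary $\delta>0$ and large indices, so a small linear-in-$n_i$ error must be absorbed into the $p_i\epsilon$ term. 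That can be done, but it should be said.

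The paper sidesteps this entirely by a cleaner device. Rather than splitting $\bj|_{\ell_{n_k}}$ and paying the crude factor $\sigma_-^{t(\ell_{n_k}-\ell_{p_k})}$, it uses submultiplicativity in the \emph{reverse} direction: summing $\phi^t(T_{\bi'\bi\bj|_{\ell_{n_k}}})\le\phi^t(T_{\bi'})\phi^t(T_{\bi\bj|_{\ell_{n_k}}})$ over all $\abs{\bi'}=N_{k-1}$ gives
\begin{equation*}
\sum_{\abs{\bi}=p_k}\phi^t(T_{\bi\bj|_{\ell_{n_k}}})\;\ge\;\Bigl(\sum_{\abs{\bi'}=N_{k-1}}\phi^t(T_{\bi'})\Bigr)^{-1}\sum_{\abs{\bi}=n_k}\phi^t(T_{\bi\bj|_{\ell_{n_k}}})\;\ge\;\Bigl(\sum_{\abs{\bi'}=N_{k-1}}\phi^t(T_{\bi'})\Bigr)^{-1}e^{n_k\epsilon}.
\end{equation*}
The denominator depends only on $n_1,\dots,n_{k-1}$, so one simply takes $n_k$ large enough. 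This works under the general hypothesis of Theorem~\ref{thm} with no restriction on $(\ell_k)$.

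The paper also organises the measure differently: it sets $\mu_k=\bigl(\sum_{\bi\in A(k)}\phi^t(T_\bi)\bigr)^{-1}\sum_{\bi\in A(k)}\phi^t(T_\bi)\,\delta_{\bi}$, weighting each admissible word by $\phi^t(T_\bi)$ itself rather than by a product of chunk-values, and passes to a weak limit. One application of~\eqref{eq:cone} then gives the single estimate $\mu_k[\bq]\le\phi^t(T_\bq)/(D\,\Sigma_q)$ valid for \emph{every} finite $\bq$, so the whole lemma reduces to showing $\Sigma_q:=\sum_{\bi\in A(q)}\phi^t(T_\bi)\ge\kappa$ uniformly in $q$; your three-case analysis for general $\bq$ is never needed. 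Your explicit product-measure route is sound in the restricted setting, but the paper's version is both shorter and strictly more general.
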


\begin{remark*}
  A measure $\mu$ satisfying~\eqref{eq:cylinderbound} could possibly be found
  through a construction similar to one used
  in~\cite{RajalaVilppolainen}, or perhaps an adaptation
  of~\cite{Kaenmaki04}, and this may relieve us from having to assume
  quasi-mul\-ti\-pli\-ca\-tivity~\eqref{eq:cone}. However, quasi-mul\-ti\-pli\-ca\-tivity is also needed in the observation \eqref{eq:refpressure}. For this reason,
  we have opted to present a self-contained proof of the existence of
  $\mu$, even if our proof \emph{does} rely on~\eqref{eq:cone}.
\end{remark*}

\begin{proof}
  Let $(n_k)$ be a sequence of natural numbers to be fixed later, with
  the property that $n_k \geq n_{k-1} + \ell_{n_{k-1}}$ for all
  $k\in\NN$. Let
  \[
    \mu_k= \frac {\sum_{\bi\in
        A(k)}\phi^t(T_{\bi})\delta_{\bi}}{\sum_{\bi\in
        A(k)}\phi^t(T_{\bi})}
  \]
  where
  \[
    A(k)=\left\{\bi : \abs{\bi} = k \textrm{ and } \exists \bj\in
      \mathcal{A}^\NN \textrm{ with } \bi\bj\in C\right\}
  \]
  is the set of $\bi$ of length $k$ that can be extended to be
  elements of $C$ (as defined above). This way $\mu_{n_k}$ is
  supported on the set of points that recur to the shrinking targets
  at times $n_1, n_2, \dots, n_k$. Define the probability measure
  $\mu$ as the limit of a convergent subsequence of $\{\mu_k\}$.

  Let $\abs{\bq}=q$.  With condition~\eqref{eq:cone} we have that
  \begin{multline}\label{eq:muk}
    \mu_k[\bq] = \frac{\sum_{\substack{\bi\in A(k) \\
          \bi\mid_q = \bq}}\phi^t(T_{\bi})}{\sum_{\bi\in
        A(k)}\phi^t(T_{\bi})} \leq
    \frac{\phi^t(T_{\bq})\sum_{\substack{\bi\in A(k) \\ \bi\mid_q =
          \bq}} \phi^t(T_{\sigma^q(\bi)})}{D\sum_{\bi\in
        A(k)}\phi^t(T_{\bi\mid_q})\phi^t(T_{\sigma^q(\bi)})} \\
    = \frac{\phi^t(T_{\bq})\sum_{\substack{\bi\in A(k) \\ \bi\mid_q =
          \bq}} \phi^t(T_{\sigma^q(\bi)})}{D\sum_{\bi\in
        A(q)}\phi^t(T_{\bi})\sum_{\substack{\bi\in A(k) \\
          \bi\mid_q = \bq}}\phi^t(T_{\sigma^q(\bi)})} =
    \frac{\phi^t(T_{\bq})}{D\sum_{\bi\in A(q)}\phi^t(T_{\bi})}
  \end{multline}
  for all $k>q$. We will now prove that the sequence $\{n_k\}$ can be
  chosen so that there is some $\kappa>0$ such that
  \begin{equation*}
    \Sigma_q:=    \sum_{\bi\in
      A(q)}\phi^t(T_{\bi}) \geq \kappa
  \end{equation*}
  for all sufficiently large $q$. This will prove the lemma with
  $H=(D\kappa)^{-1}$.

  Clearly, $\Sigma_q$ decreases in intervals of the form
  $n_k \leq q\leq n_k + \ell_{n_k}$, so we first show that given a
  non-decreasing $\iota:\mathbb{N}\to\mathbb{R}$, we can choose
  $\{n_k\}$ so that $\Sigma_{n_k + \ell_{n_k}}\geq \iota(k)$ for all
  $k$.

  Recall that $P(s,\bj)$ is decreasing in $s$. Therefore, since
  $t<s_0$, there is some $\epsilon >0$ such that
  \begin{equation}\label{eq:exponential}
    \sum_{\abs{\bi}=k}\phi^{t}(T_{\bi\bj|_{\ell_k}}) \ge \exp(k\epsilon)
  \end{equation}
  for all $k$ sufficiently large. Hence there is a number $n_1$ such
  that
  \[
    \Sigma_{n_1 + \ell_{n_1}} =
    \sum_{\abs{\bi}=n_1}\phi^{t}(T_{\bi\bj|_{\ell_{n_1}}}) \geq
    \exp(n_1 \epsilon) \geq \iota(1).
  \]
  Proceeding inductively, assume we have already chosen
  $n_1, n_2, \dots, n_{k-1}$ such that
  $\Sigma_{n_{k-1} + \ell_{n_{k-1}}}\geq \iota(k-1)$. Then
  \begin{align}
    \Sigma_{n_k + \ell_{n_k}} &=\sum_{\abs{\bi_1}= n_1}\sum_{\abs{\bi_2}= n_2 - n_1 - \ell_{n_1}}\dots \sum_{\abs{\bi_k}= n_k - n_{k-1} - \ell_{n_{k-1}}} \phi^t(T_{\bi_1 \bj|_{\ell_{n_1}} \dots \bi_k\bj|_{\ell_{n_k}}}) \nonumber \\
                              &\overset{\eqref{eq:cone}}{\geq} \sum_{\abs{\bi_1}= n_1}\dots \sum_{\abs{\bi_k}= n_k - n_{k-1} - \ell_{n_{k-1}}} D \phi^t(T_{\bi_1 \bj|_{\ell_{n_1}} \dots \bi_{k-1}\bj|_{\ell_{n_{k-1}}}}) \phi^t(T_{\bi_k\bj|_{\ell_{n_k}}}) \nonumber \\
                              &= \Sigma_{n_{k-1} + \ell_{n_{k-1}}} D \sum_{\abs{\bi} = n_k -
                                n_{k-1} - \ell_{n_{k-1}}} \phi^t(T_{\bi
                                \bj|_{\ell_{n_k}}}). \nonumber \intertext{Now, by the induction
                                hypothesis,}
                              &\ge \iota(k-1) D \left(\sum_{\abs{\bi} = n_{k-1} + \ell_{n_{k-1}}}\phi^t (T_{\bi}) \right)^{-1}\sum_{\abs{\bi}= n_k}\phi^t(T_{\bi\bj|_{\ell_{n_k}}}) \nonumber \\
                              &\overset{\eqref{eq:exponential}}{\ge} \left[\iota(k-1) D
                                \left(\sum_{\abs{\bi} = n_{k-1} + \ell_{n_{k-1}}}\phi^t
                                (T_{\bi}) \right)^{-1}\right]
                                \exp(n_k\epsilon), \label{eq:lastline}
  \end{align}
  as long as we choose $n_k$ sufficiently large. Notice that the
  quantity in square brackets is a positive number only depending on
  our choices of $n_1, \dots, n_{k-1}$, so we can certainly choose
  $n_k$ large enough that~\eqref{eq:lastline} exceeds $\iota(k)$, as
  claimed.

  Now, suppose that we are in an interval of the form
  $n_k + \ell_{n_k} \leq q\leq n_{k+1}$, and
  $q- n_k - \ell_{n_k}= \ell$. Then~\eqref{eq:cone} implies that
  \begin{equation*}
    \Sigma_q \geq D\Sigma_{n_k + \ell_{n_k}}\sum_{\bi=\ell}\phi^t(T_{\bi}).
  \end{equation*}
  Since $t<s_0<\dim E_{\ba}$, the expression
  $\sum_{\bi=\ell}\phi^t(T_{\bi})$ is bounded below by a constant that
  is uniform over $\ell\in\NN$, which proves that there is some
  $\kappa >0$ such that $\Sigma_q \geq \kappa$ for $q$ sufficiently
  large, as wanted.

  Returning to~\eqref{eq:muk}, the lemma is proved with
  $H=(D\kappa)^{-1}$.
\end{proof}

\begin{lemma}\label{lem:energy}
  The image measure $\pi_*\mu$ given by Lemma \ref{lem:cylinderbound}
  has finite $s$-energy for almost all $\ba$.
\end{lemma}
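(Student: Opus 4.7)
The plan is to bound the $s$-energy of $\pi_*\mu$ in expectation after integrating over $\ba$ in an arbitrary bounded cube $B\subset\RR^{dm}$, and then conclude almost sure finiteness by Tonelli. I would first write
\[
  \int_B I_s(\pi_*\mu)\,d\ba = \int\int \left(\int_B \frac{d\ba}{\abs{\pi_\ba(\bi)-\pi_\ba(\bi')}^s}\right) d\mu(\bi)\,d\mu(\bi'),
\]
and then invoke Falconer's transversality-type estimate from \cite{Falconer88}, which supplies a constant $C=C(B,s)>0$ such that
\[
  \int_B \frac{d\ba}{\abs{\pi_\ba(\bi)-\pi_\ba(\bi')}^s} \le \frac{C}{\phi^s(T_{\bi\wedge\bi'})}
\]
for all distinct $\bi,\bi'\in\cA^\NN$. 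This is valid because $0<s<s_0\le d$, and because our standing hypothesis $\max_i\sigma_1(T_i)<1/2$ provides the non-degeneracy Falconer's lemma requires.

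Next I would stratify the resulting integral by the length $k=\abs{\bi\wedge\bi'}$ of the common prefix. Since the set of pairs with $\bi\wedge\bi'=\bq$ is contained in $[\bq]\times[\bq]$,
\[
  \int_B I_s(\pi_*\mu)\,d\ba \le C\sum_{k=0}^\infty\sum_{\abs{\bq}=k}\frac{\mu[\bq]^2}{\phi^s(T_\bq)}.
\]
I would then apply Lemma~\ref{lem:cylinderbound} to bound $\mu[\bq]\le H\phi^t(T_\bq)$, so that $\mu[\bq]^2\le H\phi^t(T_\bq)\mu[\bq]$. A direct computation from the definition of $\phi^t$ yields
\[
  \frac{\phi^t(T_\bq)}{\phi^s(T_\bq)}\le \sigma_1(T_\bq)^{t-s}\le \sigma_+^{k(t-s)},
\]
where $\sigma_+=\max_i\sigma_1(T_i)<1/2$ and the second inequality uses submultiplicativity of the top singular value. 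Combined with $\sum_{\abs{\bq}=k}\mu[\bq]=1$, this produces
\[
  \int_B I_s(\pi_*\mu)\,d\ba \le CH\sum_{k=0}^\infty \sigma_+^{k(t-s)} < \infty
\]
because $t>s$. Hence $I_s(\pi_*\mu)<\infty$ for almost every $\ba\in B$, and since $B$ is arbitrary, for almost every $\ba\in\RR^{dm}$.

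The remaining routine step is just checking the bound on the ratio $\phi^t/\phi^s$; everything else is driven by Lemma~\ref{lem:cylinderbound}. The one non-trivial ingredient is Falconer's integral estimate, whose proof exploits the affinity of $\ba\mapsto\pi_\ba(\bi)-\pi_\ba(\bi')$ and the geometry of the ellipsoid $T_{\bi\wedge\bi'}(B(0,1))$. That this estimate controls the $\ba$-integral by $\phi^s(T_{\bi\wedge\bi'})^{-1}$ is precisely what links the \emph{symbolic} measure upper bound $\mu[\bq]\le H\phi^t(T_\bq)$ to the \emph{geometric} $s$-energy of its projection, and is also what forces the slight loss from $t$ to $s<t<s_0$ in the energy exponent (the geometric series needs $s<t$ to converge).
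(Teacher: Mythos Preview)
Your proof is correct and follows essentially the same route as the paper: reduce via Falconer's transversality estimate (and Fubini) to the symbolic double integral $\iint \phi^s(T_{\bi\wedge\bi'})^{-1}\,d\mu\,d\mu$, stratify by the common prefix, apply the cylinder bound $\mu[\bq]\le H\phi^t(T_\bq)$, use $\phi^t(T_\bq)/\phi^s(T_\bq)\le\sigma_+^{\abs{\bq}(t-s)}$, and sum a geometric series. The only nuance worth noting is that the transversality lemma under the hypothesis $\max_i\sigma_1(T_i)<\tfrac12$ (rather than Falconer's original $\tfrac13$) is due to Solomyak~\cite{Solomyak98}, which the paper also cites.
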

\begin{proof}
  Using \cite[Lemma
  3.1]{Falconer88},~\cite[Proposition~3.1]{Solomyak98}, and Fubini's
  theorem, the problem inside any ball of radius $\rho$ reduces to the
  study of finiteness of
  \[
    I = \iint_{R\times
      R}\frac{d\mu(\bi)\,d\mu(\bk)}{\phi^s(T_{\bi\wedge\bk})}.
  \]
  Now, using \eqref{eq:cylinderbound}, the fact that $\mu$ is a
  probability measure, and $t>s$
  \begin{align*}
    I& \le \sum_{q=1}^\infty \sum _{\abs{\bq}=q}\sum_{i\neq k} \phi^s(T_{\bq})^{-1}\mu[\bq i]\mu[\bq k]\\
     & \le Hm\sum_{q=1}^\infty \sum _{\abs{\bq}=q}\sum_{k=1}^m \frac{\phi^t(T_{\bq})}{\phi^s(T_{\bq})}\mu[\bq k] \\
     &\le Hm \sum_{q=1}^\infty \sigma_+^{q(t-s)}<\infty.
  \end{align*}
  Since $\rho$ is arbitrary, the claim follow for almost all $\ba$.
\end{proof}

\begin{proof}[Proof of Theorem~\ref{thm}]
  Fix a center $\bj$ and a sequence $(\ell_k)$ of target sizes such
  that the limit
  $\lim_{k\to\infty}\frac{1}{k}\log \varphi^s(T_{\bj |_{\ell_k}})$
  exists. The almost-sure existence and uniqueness of the zero point
  $s_0$ of the pressure follow from Lemma \ref{lem:number exists}, and
  the estimate $\dim_{\mathcal H}\widetilde R(\pi(\bj))\le s_0$ from
  Lemma \ref{lem:upper}.

  For the lower bound, let $s<s_0$. By Lemma \ref{lem:energy} and the
  potential theoretic characterization of the Hausdorff dimension,
  since $\mu$ is supported on $C$, we have
  $\dim_{\mathcal H}\pi(C)\ge s$ almost surely. Approaching $s_0$
  along a sequence gives the lower bound for the dimension of
  $\pi(C)\subset\widetilde R(\pi(\bj))$.
\end{proof}
\begin{remark*}\label{rem:inj}
Notice that, the dimension of the recurring set is not affected by the set of points where $\pi$ is not an injection, as long as this set has dimension smaller than $s_0$. This is the case for large classes of self-affine sets, for a Lebesgue-positive proportion of translation vectors $\ba$, and in these cases there is the more geometric interpretation of \S\ref{sec:on fractal} for the result of Theorem \ref{thm}. 
\end{remark*}

\section{Some further observations}
\label{sec:examples}

It is classical to study the shrinking target problem for an invariant
measure. Unfortunately there is no canonical choice for a measure on a
self-affine set. There is, however, a natural family of measures,
called Gibbs measures, which we now define. In general, a Gibbs
measure for the potential $\phi^t$, or a {\bf Gibbs measure at $t$},
is a $\sigma$-invariant probability measure $\mu$ on the symbolic
space $\mathcal A^\mathbb N$ satisfying
\begin{equation}\label{eq:gibbs}
  C^{-1}\le \frac{\mu[\bq]}{\phi^t(T_{\bq})\exp(-\abs{\bq}\mathcal P(t))}\le C. 
\end{equation}
Unfortunately, as demonstrated by \cite[Example
6.4]{KaenmakiVilppolainen10}, Gibbs measures do not always
exist. However, under the assumption \eqref{eq:cone} they do and,
further, a Gibbs measure at the zero point of the pressure
$\mathcal P$ is a measure of maximal dimension, that is,
\[
  \dim_{\mathcal H}(\pi_*\mu) = \dim_{\mathcal H} E_{\ba}
\]
for almost all $\ba \in \mathbb R^{dm}$. These facts were proved in
\cite[Theorem 5.2]{KaenmakiVilppolainen10},
\cite[p. 8-9]{KaenmakiVilppolainen10} and \cite[Theorem
2.2]{KaenmakiVilppolainen08}. Also see \cite[Theorem
4.1]{KaenmakiReeve}. This means that the Gibbs measure is a natural
candidate as a measure to the size of the recurring set. Now we are in
the position to formulate and prove the following corollary.

\begin{corollary}[Corollary to Theorem 2.1 of \cite{ChernovKleinbock01}]\label{cor:Borel}
  Assume $E$ is a self-affine set as in Theorem \ref{thm}. Let
  $(\ell_k)\subset \mathbb N$ be an increasing sequence tending to
  $\infty$, and $\bj\in \mathcal A^{\mathbb N}$. Denote by $\mu$ the
  Gibbs measure at the point $t$ where $\mathcal P(t)=0$. Then the
  measure of the recurrence set $\pi_*\mu(\widetilde R(\pi(\bj)))$ is
  either $0$ or $1$ according to whether the sum
  \begin{equation}\label{eq:measure condition}
    \sum_{k=1}^\infty\mu([\bj_{\ell_k}])
  \end{equation}
  converges or diverges.
\end{corollary}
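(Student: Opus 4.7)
The plan is to reduce the corollary to an application of~\cite[Theorem~2.1]{ChernovKleinbock01} on the symbolic side, with the dynamics $(\cA^\NN, \sigma, \mu)$ and cylinder targets $B_k := [\bj|_{\ell_k}]$. First I would rewrite
\[
\pi_*\mu\bigl(\widetilde R(\pi(\bj))\bigr) \;=\; \mu\bigl(\pi^{-1}\pi(R(\bj))\bigr) \;\geq\; \mu(R(\bj)) \;=\; \mu\bigl(\limsup_{k} \sigma^{-k}B_k\bigr).
\]
For Lebesgue-almost every translation $\ba$ the overlap set on which $\pi$ fails to be injective carries no $\mu$-mass (this is the measure-theoretic counterpart of the remark following Theorem~\ref{thm}, supplied by transversality in the spirit of Lemma~\ref{lem:energy}), so up to a null set the two sides agree and it suffices to compute the symbolic quantity $\mu(\limsup \sigma^{-k}B_k)$.

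The convergence half is then the classical Borel--Cantelli lemma: $\sigma$-invariance of $\mu$ gives $\mu(\sigma^{-k}B_k) = \mu(B_k) = \mu([\bj|_{\ell_k}])$, so summability of~\eqref{eq:measure condition} forces $\mu(\limsup \sigma^{-k}B_k)=0$, and hence $\pi_*\mu(\widetilde R(\pi(\bj))) = 0$.

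The divergence half is the substantive direction and is where~\cite[Theorem~2.1]{ChernovKleinbock01} does the work. That theorem upgrades a quasi-independence estimate of the form $\mu(A \cap \sigma^{-n}B) \le C\mu(A)\mu(B)$ (for $A, B$ in a suitable algebra of cylinder sets and for $n$ sufficiently large relative to the length of $A$) into the full dynamical Borel--Cantelli statement that $\mu(\limsup \sigma^{-k}B_k) = 1$ whenever $\sum_k \mu(B_k) = \infty$. In our setting, the verification of quasi-independence is where the Gibbs property~\eqref{eq:gibbs} and quasimultiplicativity~\eqref{eq:cone} come into play: together they yield the two-sided estimate $\mu([\bq\bq']) \asymp \mu[\bq]\,\mu[\bq']$, which is precisely the near-independence condition required by Chernov--Kleinbock. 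Feeding this into their theorem produces $\mu(\limsup \sigma^{-k}B_k) = 1$, and combining with the projection reduction of the first paragraph gives $\pi_*\mu(\widetilde R(\pi(\bj))) = 1$.

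The main obstacle will be to match the precise hypotheses of~\cite[Theorem~2.1]{ChernovKleinbock01} to our setting, in particular tracking the quasi-independence constants uniformly across the sequence of targets $B_k$ and confirming that the overlap locus of $\pi$ is indeed $\mu$-null for almost every $\ba$. Everything else is structural: Gibbs measures for the potential $\phi^t$ exist under~\eqref{eq:cone} by the cited K\"aenm\"aki--Vilppolainen results, and once the symbolic $0$--$1$ dichotomy is in hand the projection transfer completes the proof.
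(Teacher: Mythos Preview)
Your proposal is correct and follows essentially the same route as the paper: reduce to the symbolic system, apply the classical Borel--Cantelli lemma for the convergence half, and invoke \cite[Theorem~2.1]{ChernovKleinbock01} for the divergence half after verifying the requisite quasi-independence via the Gibbs property~\eqref{eq:gibbs} together with quasimultiplicativity~\eqref{eq:cone} (what the paper phrases as ``Facts~1--3'' of Chernov--Kleinbock).

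The one point of divergence is your handling of the projection identity $\pi_*\mu(\widetilde R(\pi(\bj))) = \mu(R(\bj))$. The paper simply asserts this equality in both directions; you instead record the trivial inequality $\pi_*\mu(\widetilde R(\pi(\bj)))\ge\mu(R(\bj))$ and then appeal to a transversality argument (valid for Lebesgue-a.e.\ $\ba$) to upgrade it to equality. Your care here is warranted---the inequality alone suffices for the divergence direction, but the convergence direction genuinely needs the reverse inequality, and the paper does not justify it---though note that your route introduces an ``almost every $\ba$'' qualifier that the stated corollary does not carry.
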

\begin{proof}
  First, the fact that the convergence of the sum~\eqref{eq:measure
    condition} implies $\pi_*\mu (\tilde R(\pi(\bj)))=0$ follows from
  the classical Borel--Cantelli lemma. This is the case since
  $\pi_*\mu (\widetilde R(\pi(\bj)))=\mu(R(\bj))$ and $\mu$ is
  $\sigma$-invariant.

  Now assume that the sum \eqref{eq:measure condition}
  diverges. Notice that while the definition of Gibbs measures in the
  symbolic space corresponding to self-affine sets does not
  necessarily coincide with the classical one, under the assumption
  \eqref{eq:cone} the measure $\mu$ satisfies the Facts 1-3 in Section
  3 of \cite{ChernovKleinbock01}. As these facts are all the knowledge
  on $\mu$ needed to prove Theorem 2.1 of \cite{ChernovKleinbock01},
  it follows that $\mu(R(\bj))=\pi_*\mu (\widetilde R(\pi(\bj)))$ has
  measure $1$.
\end{proof}

We finish with two further questions related to Theorem \ref{thm}.
\begin{itemize}
\item Will the claim of Theorem \ref{thm} remain true without
  condition~\eqref{eq:cone}?
\item What about recurrence under $F$ to geometric sets $B_k$, for
  example to balls in $\mathbb R^d$? What could be the counterpart of
  $s_0$? One has to be a little careful when stating a result of this
  type since, as pointed out above, in the overlapping cases $F$ is
  not even well-defined, and in most cases the corresponding symbolic
  problem becomes very hard to track. In our treatment it is possible
  that points fairly close to the target point $\pi(\bj)$ on the
  fractal set are not considered to be recurring, if they are far away
  from $\bj$ in the symbolic space. Taking this kind of shortcut
  becomes impossible in the geometric case.
\end{itemize}

\bibliographystyle{plain}

\bibliography{vaitbib}

\end{document}